\documentclass[12pt]{article}

\usepackage[T1]{fontenc}
\usepackage[utf8]{inputenc}
\usepackage{lmodern}

\usepackage[left=1.05in,right=1.05in,top=1in,bottom=1in]{geometry}
\usepackage{amsmath,amssymb,amsfonts,amsthm,mathtools}
\usepackage{bm}
\usepackage{graphicx}
\usepackage{float}
\usepackage{tikz}
\usetikzlibrary{arrows.meta,calc,positioning,decorations.markings}
\usepackage{enumitem}
\usepackage{hyperref}
\usepackage[noabbrev]{cleveref}
\usepackage{microtype}

\hypersetup{
  colorlinks=true,
  linkcolor=blue!50!black,
  citecolor=blue!60!black,
  urlcolor=blue!60!black
}

\numberwithin{equation}{section}

\newcommand{\R}{\mathbb{R}}
\newcommand{\C}{\mathbb{C}}
\newcommand{\HH}{\mathbb{H}^3(-1)}
\newcommand{\tr}{\mathrm{tr}}
\newcommand{\SL}{\mathrm{SL}(2,\C)}
\newcommand{\SU}{\mathrm{SU}(2)}
\newcommand{\Herm}{\mathrm{Herm}(2)}

\newtheorem{theorem}{Theorem}[section]
\newtheorem{proposition}[theorem]{Proposition}
\newtheorem{lemma}[theorem]{Lemma}
\newtheorem{corollary}[theorem]{Corollary}
\theoremstyle{definition}
\newtheorem{definition}[theorem]{Definition}
\newtheorem{remark}[theorem]{Remark}
\newtheorem{example}[theorem]{Example}

\title{A Weierstrass-Kenmotsu Type Representation for CMC
\texorpdfstring{$0\le H<1$}{0<=H<1} in
\texorpdfstring{$\mathbb{H}^3(-1)$}{H3(-1)}}
\author{Magdalena Toda, Erhan Güler, Madusha Dilhani Atampalage}
\date{}

\begin{document}

\maketitle

\begin{abstract}
We develop a Weierstrass-Kenmotsu type representation for conformal immersions of constant mean curvature $0\le H<1$ in hyperbolic $3$-space $\HH$. The construction is based on the Hermitian model of $\HH$, a balanced spectral deformation, and Iwasawa splitting of $\SL$.

We show that such immersions arise locally from a rank-one $(1,0)$-form $\eta$ and a constant complex parameter $\lambda\in\C^*$ through a flat $\SL$-connection of the form
\[
S^{-1}dS=\eta-\lambda\,\eta^*,
\]
with mean curvature
\[
H=\frac{1-|\lambda|^2}{1+|\lambda|^2}.
\]
Conversely, every conformal CMC immersion with $0\le H<1$ is locally obtained from such flat rank-one data.

We establish an explicit correspondence with the representation of Aiyama and Akutagawa via a gauge transformation, and interpret the construction in terms of Kokubu's adjusted normal Gauss map. We further discuss the role of the flatness condition, present simple local and cylindrical model examples, and outline aspects of monodromy and numerical implementation within this framework.
\end{abstract}

\textbf{Keywords:} minimal surfaces; CMC surfaces; hyperbolic 3-space; Weierstrass representation; Iwasawa splitting; adjusted normal Gauss map; DPW method; monodromy; Willmore surfaces.\\[2pt]
\textbf{MSC 2020:} 53A10; 53C42; 37K35; 53C30.

\tableofcontents

\section{Introduction}

The classical Enneper--Weierstrass representation expresses minimal surfaces in $\R^3$ in terms of holomorphic data, and has been extended in several directions to constant mean curvature (CMC) surfaces in other ambient geometries. In hyperbolic $3$-space $\HH$, Bryant's representation for $H=1$ \cite{Bryant1987} and Kenmotsu-type representation formulas provide foundational examples of such constructions.

In the range $0\le H<1$, several approaches have been developed, including the work of Aiyama and Akutagawa \cite{AiyamaAkutagawa2000} and loop group formulations \cite{DorfmeisterInoguchiKobayashi2015, DorfmeisterPeditWu1998}. These formulations reveal a rich integrable-systems structure underlying CMC surface theory in hyperbolic space.

In this paper, we present a representation framework for conformal CMC immersions with $0\le H<1$ based on three geometric ingredients:
\begin{itemize}
\item the Hermitian model of $\HH$;
\item a balanced spectral deformation;
\item Iwasawa splitting of $\SL$.
\end{itemize}

Our main result shows that such immersions arise locally from a rank-one $(1,0)$-form $\eta$ and a constant parameter $\lambda\in\C^*$ through a flat connection
\[
S^{-1}dS=\eta-\lambda\,\eta^*,
\]
with the mean curvature determined by $|\lambda|$. The flatness condition plays a central role: it is precisely the integrability condition which encodes the Gauss--Codazzi equations in this formulation.

We establish an explicit correspondence with the representation of Aiyama and Akutagawa via a gauge transformation, showing that the two formulations are equivalent at the level of flat connections. The construction also gives a transparent interpretation in terms of Kokubu's adjusted normal Gauss map and its associated metric structure.

To illustrate the framework, we present simple classes of flat rank-one data leading to local model solutions. These examples emphasize that the rank-one condition alone is not sufficient; the flatness condition imposes additional differential constraints on the admissible data.

Finally, we discuss aspects of monodromy, period problems, and numerical implementation, including a Magnus expansion scheme for integrating the flat connection and practical checks for flatness and unitarization.

\medskip
\noindent\textbf{Terminological note.}
We thank Professor Kazuo Akutagawa for helpful comments concerning the terminology and historical attribution of the Gauss map used in this setting. In the present paper, we use the terminology ``adjusted normal Gauss map'' in the sense of the representation framework developed by Aiyama and Akutagawa. We also acknowledge related earlier work of Kokubu on normal Gauss maps for minimal surfaces in hyperbolic space. These clarifications are terminological and do not affect the results of the paper.

\medskip
\noindent\textbf{Organization of the paper.}
\Cref{sec:geom-models} introduces the Hermitian model and the Lax pair formulation. \Cref{sec:spectral-iwasawa} develops the balanced spectral deformation and Iwasawa splitting. \Cref{sec:gauss-map} recalls the adjusted normal Gauss map and Kokubu's metric. \Cref{sec:main-theorem} proves the representation theorem. \Cref{sec:AA-dictionary} establishes the correspondence with Aiyama and Akutagawa. Later sections discuss global aspects, examples, stability considerations, and numerical implementation.

\section{Geometric setup and Lax pair}
\label{sec:geom-models}

\subsection{Hermitian model of $\HH$ and frames}

We identify $\R^{3,1}$ with $\Herm$ via
\[
(x^0,x^1,x^2,x^3)\longleftrightarrow
X=
\begin{pmatrix}
x^0+x^3 & x^1+ix^2\\
x^1-ix^2 & x^0-x^3
\end{pmatrix}\cdot
\]
The Lorentz product is
\[
\langle X,Y\rangle=-\frac12\,\tr(X\,\sigma_2\,Y^t\,\sigma_2),
\qquad
\sigma_2=
\begin{pmatrix}
0&i\\
-i&0
\end{pmatrix}\cdot
\]
Then
\[
\HH=\{X\in\Herm:\langle X,X\rangle=-1,\ X_{00}>0\},
\]
and $\SL$ acts on $\HH$ by
\[
g\cdot X=gXg^*,\qquad g\in\SL.
\]
Thus $\HH\simeq \SL/\SU$.

Let $f:M\to\HH$ be a conformal immersion with metric
\[
ds^2=e^{2u}|dz|^2.
\]
A moving frame $F:M\to\SL$ may be chosen so that
\begin{equation}\label{eq:geomframe}
e^{-u}f_z=
F\begin{pmatrix}0&1\\0&0\end{pmatrix}F^*,
\qquad
e^{-u}f_{\bar z}=
F\begin{pmatrix}0&0\\1&0\end{pmatrix}F^*,
\qquad
n=
F\begin{pmatrix}1&0\\0&-1\end{pmatrix}F^*.
\end{equation}

\subsection{Lax pair and Gauss--Codazzi}

Write
\[
F^{-1}dF=A\,dz+B\,d\bar z,
\]
where
\begin{equation}\label{eq:AB}
A=
\begin{pmatrix}
\frac12 u_z & \frac12 e^u(1+H)\\[2pt]
-\frac14 e^{-u}Q & -\frac12 u_z
\end{pmatrix},
\qquad
B=
\begin{pmatrix}
-\frac12 u_{\bar z} & \frac14 e^{-u}\bar Q\\[2pt]
\frac12 e^u(1-H) & \frac12 u_{\bar z}
\end{pmatrix}.
\end{equation}

\begin{lemma}[Gauss--Codazzi]\label{lem:GC}
The flatness equation
\[
A_{\bar z}-B_z-[A,B]=0
\]
is equivalent to
\begin{align}
u_{z\bar z}
-\frac{e^{2u}}{4}(1-H^2)
-\frac{e^{-2u}}{16}Q\bar Q&=0,
\label{eq:gauss}\\
Q_{\bar z}&=2e^{2u}H_z.
\label{eq:codazzi}
\end{align}
In particular, for a conformal immersion in $\HH$, the mean curvature $H$ is constant if and only if $Q$ is holomorphic.
\end{lemma}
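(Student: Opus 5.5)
The plan is a direct entry-by-entry computation of the $2\times 2$ matrix $A_{\bar z}-B_z-[A,B]$, using the explicit forms in \eqref{eq:AB}. I will write
\[
A=\begin{pmatrix} a & b\\ c & -a\end{pmatrix},\qquad
B=\begin{pmatrix} -a' & b'\\ c' & a'\end{pmatrix},
\]
with $a=\tfrac12u_z$, $b=\tfrac12e^u(1+H)$, $c=-\tfrac14e^{-u}Q$, and $a'=\tfrac12u_{\bar z}$, $b'=\tfrac14e^{-u}\bar Q$, $c'=\tfrac12e^u(1-H)$. A short computation gives the commutator:
\[
[A,B]=\begin{pmatrix} bc'-b'c & 2ab'+2a'b\\ -2a'c-2ac' & -(bc'-b'c)\end{pmatrix}.
\]
Since $A$, $B$ and $[A,B]$ are all trace-free, the $(2,2)$ entry of the flatness equation is minus the $(1,1)$ entry. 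So it suffices to examine the $(1,1)$, $(1,2)$ and $(2,1)$ entries.

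\emph{Diagonal entry.} The $(1,1)$ entry is $a_{\bar z}+a'_z-bc'+b'c$. Here $a_{\bar z}+a'_z=u_{z\bar z}$, $bc'=\tfrac14e^{2u}(1-H^2)$ and $b'c=-\tfrac1{16}e^{-2u}Q\bar Q$. Substituting gives exactly the left-hand side of \eqref{eq:gauss}.

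\emph{Off-diagonal entries.} The $(1,2)$ entry is $b_{\bar z}-b'_z-2ab'-2a'b$. Expanding $b_{\bar z}$ by the product rule, the term $\tfrac12e^uu_{\bar z}(1+H)$ cancels against $2a'b$. Likewise, the term $-\tfrac14e^{-u}u_z\bar Q$ in $b'_z$ cancels against $2ab'$. What remains is $\tfrac12e^uH_{\bar z}-\tfrac14e^{-u}\bar Q_z$, whose vanishing is equivalent to $\bar Q_z=2e^{2u}H_{\bar z}$. Because $H$ is real-valued, this is the complex conjugate of \eqref{eq:codazzi}. The $(2,1)$ entry, $c_{\bar z}-c'_z+2a'c+2ac'$, is handled the same way: the $u$-derivative terms cancel and one is left with $-\tfrac14e^{-u}Q_{\bar z}+\tfrac12e^uH_z$, which vanishes if and only if \eqref{eq:codazzi} holds. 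Thus flatness holds if and only if \eqref{eq:gauss} and \eqref{eq:codazzi} hold.

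\emph{Final claim.} By \eqref{eq:codazzi} and $e^{2u}>0$, $Q$ is holomorphic if and only if $H_z=0$. Since $H$ is real, $H_{\bar z}=\overline{H_z}$, so $H_z=0$ forces $dH=0$, and $H$ is constant on the connected surface $M$. The converse implication is immediate.

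The only real obstacle is bookkeeping: getting the signs in the commutator right and making sure the $u_z$, $u_{\bar z}$ terms cancel exactly in the off-diagonal entries. This cancellation depends on the precise placement of the factors $e^{\pm u}$ in \eqref{eq:AB}, so I would verify it carefully rather than assume it.
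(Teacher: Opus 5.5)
Your proof is correct and follows the same route as the paper: substitute \eqref{eq:AB} directly into the flatness equation, read off the Gauss equation from the diagonal entries, and read off the Codazzi equation and its conjugate from the off-diagonal entries. Your explicit commutator, the cancellation of the $u_z$ and $u_{\bar z}$ terms, and your use of the reality of $u$ and $H$ together with the connectedness of $M$ for the final claim supply the details that the paper's short proof leaves implicit.
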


\begin{proof}
The statement follows by substituting \eqref{eq:AB} into the flatness equation. The off-diagonal entries yield the Codazzi equation and its conjugate, while the diagonal part yields the Gauss equation. In particular, \eqref{eq:codazzi} gives $Q_{\bar z}=0$ precisely when $H$ is constant.
\end{proof}

\begin{remark}
For $H=0$ and normalized Hopf differential $Q\equiv2$, \eqref{eq:gauss} becomes
\[
4u_{z\bar z}-\cosh(2u)=0,
\]
the cosh-Gordon equation. This contrasts with the sinh-Gordon equation arising in the Euclidean CMC setting and reflects the distinction between the regimes $H>1$ and $0\le H<1$ in $\HH$.
\end{remark}

\section{Spectral deformations, balanced gauge, and Iwasawa splitting}
\label{sec:spectral-iwasawa}

\subsection{Balanced spectral deformation}

Following \cite{DorfmeisterInoguchiKobayashi2015,Toda2005}, consider the spectral changes
\[
(1\pm H)\mapsto s^{\pm1}(1\pm H),
\qquad s>0,
\]
together with the phase deformation
\[
Q\mapsto \theta^{-2}Q,
\qquad \theta\in S^1.
\]
Conjugating by
\[
g(\theta)=i
\begin{pmatrix}
0&\theta^{1/2}\\
\theta^{-1/2}&0
\end{pmatrix},
\qquad
R_{\pi/4}=
\begin{pmatrix}
e^{-i\pi/4}&0\\
0&e^{i\pi/4}
\end{pmatrix},
\]
collects the $S^1$ phase in the off-diagonal terms and introduces the factors needed for comparison with the unitary part of the Iwasawa decomposition.

\subsection{Iwasawa splitting and adjusted frame}

\begin{definition}[Iwasawa splitting]\label{def:iwasawa}
Let
\[
S=
\left\{
\begin{pmatrix}
a&w\\
0&a^{-1}
\end{pmatrix}
:\ a>0,\ w\in\C
\right\}
\subset \SL.
\]
Multiplication
\[
S\times \SU\longrightarrow \SL
\]
is a diffeomorphism. Thus every $F\in\SL$ decomposes uniquely as
\[
F=F_s\,\Phi,
\qquad
F_s\in S,\quad \Phi\in\SU.
\]
We call $\Phi$ the adjusted unitary frame.
\end{definition}

Since $\Phi\in\SU$, its Maurer--Cartan form satisfies
\[
\Phi^{-1}d\Phi\in su(2).
\]
Matching the off-diagonal magnitudes in the balanced Lax pair with the unitary part of the Iwasawa splitting gives the following condition.

\begin{proposition}[Balanced modulus]\label{prop:balance}
The balanced spectral parameter satisfies
\begin{equation}\label{eq:sH}
s=\sqrt{\frac{1-H}{1+H}}\in(0,1],
\qquad
H=\frac{1-s^2}{1+s^2}\cdot
\end{equation}
The phase parameter gives the associated $S^1$-family.
\end{proposition}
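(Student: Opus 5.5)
The plan is to apply the spectral deformation to the Lax pair \eqref{eq:AB} and choose $s$ so that the two off-diagonal "metric" entries have equal modulus. Under $(1\pm H)\mapsto s^{\pm1}(1\pm H)$, the $(1,2)$-entry of $A$ becomes $\tfrac12 e^u s(1+H)$ and the $(2,1)$-entry of $B$ becomes $\tfrac12 e^u s^{-1}(1-H)$. The Hopf terms $-\tfrac14 e^{-u}\theta^{-2}Q$ and $\tfrac14 e^{-u}\theta^{2}\bar Q$ are already complex conjugates of one another in modulus.

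First I check that the deformed pair $A_{s,\theta},B_{s,\theta}$ still satisfies the flatness equation of \Cref{lem:GC}. The Gauss equation \eqref{eq:gauss} only involves the product
\[
s(1+H)\cdot s^{-1}(1-H)=1-H^2,
\]
and $|\theta^{-2}Q|^2=|Q|^2$. The Codazzi equation \eqref{eq:codazzi} is unchanged because $H$ is constant and $Q$ is holomorphic. So this is an honest two-parameter family of flat connections.

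Next I use the Iwasawa splitting of \Cref{def:iwasawa}. The unitary factor $\Phi$ has $\Phi^{-1}d\Phi\in su(2)$. Writing $X=A\,dz+B\,d\bar z$, this is the condition $X^*=-X$. Its off-diagonal part says that the coefficient of $dz$ in the $(1,2)$-slot and the coefficient of $d\bar z$ in the $(2,1)$-slot must agree up to sign and conjugation. Thus in particular they must have equal modulus.

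I conjugate by $g(\theta)R_{\pi/4}$. This only permutes the off-diagonal slots and multiplies them by unimodular factors $\theta^{\pm1/2}$ and $e^{\pm i\pi/2}$. It therefore does not change the moduli, and it arranges the signs and phases so that the balance condition is exactly a modulus condition. Matching the two moduli gives
\[
s(1+H)=s^{-1}(1-H),
\]
so $s^2=(1-H)/(1+H)$. Taking the positive root yields the formula in \eqref{eq:sH}. Since $0\le H<1$, we get $s\in(0,1]$, with $s=1$ exactly when $H=0$. Solving for $H$ gives $H=(1-s^2)/(1+s^2)$. The leftover parameter $\theta\in S^1$ enters only through unimodular phases, so it parametrizes the associated $S^1$-family.

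The main obstacle is to justify the phrase "matching magnitudes" rigorously, that is, to identify which entries the Iwasawa unitary part forces into correspondence. I will handle this by computing the conjugated matrices explicitly: write out $R_{\pi/4}^{-1}g(\theta)^{-1}A_{s,\theta}\,g(\theta)R_{\pi/4}$ and the corresponding expression for $B_{s,\theta}$. Then I impose the anti-Hermitian condition entrywise on the off-diagonal terms. A diagonal $S$-part may absorb the $u_z$ terms, but it cannot rescale the moduli of the metric off-diagonal entries asymmetrically without destroying the normalization \eqref{eq:geomframe}. This forces the balance equation above.
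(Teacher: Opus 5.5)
Your algebra is the paper's. The paper sets each deformed modulus $s^{\pm1}\frac{e^u}{2}(1\pm H)$ equal to $\frac{e^u}{2}\sqrt{1-H^2}$. Since $s(1+H)\cdot s^{-1}(1-H)=1-H^2$ does not depend on $s$ (as your Gauss-equation check notes), this is the same as your $s(1+H)=s^{-1}(1-H)$. Your flatness check of the deformed pair is a correct addition.

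The gap is the justification you propose for the balance condition; that step would fail.
\begin{itemize}
\item The form $X=A\,dz+B\,d\bar z$ is the Maurer--Cartan form of the whole frame, not of the Iwasawa factor. From $F=F_s\Phi$ one gets $F^{-1}dF=\Phi^{-1}(F_s^{-1}dF_s)\Phi+\Phi^{-1}d\Phi$, and the first term generally has off-diagonal entries. So $\Phi^{-1}d\Phi\in su(2)$ says nothing directly about the entries of $X$.
\item $X$ is never anti-Hermitian. $X^*=-X$ means $B=-A^*$. The diagonal and Hopf entries satisfy this, but the metric entries would need $\frac{e^u}{2}s^{-1}(1-H)=-\frac{e^u}{2}s(1+H)$, which has no solution with $s>0$.
\item Conjugating by $g(\theta)R_{\pi/4}$ cannot fix the sign. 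That matrix is unitary, and $(P^{-1}XP)^*=P^{-1}X^*P$ for unitary $P$.
\end{itemize}
So imposing the anti-Hermitian condition entrywise gives a contradiction, not your balance equation. Equality of moduli is only a consequence of a condition that cannot hold. Your remark about the diagonal $S$-part does not touch this sign problem.

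What actually picks out your equation is the Cartan splitting $\mathfrak{sl}(2,\C)=su(2)\oplus i\,su(2)$, read the other way round. The metric block of $X$ (the $E_{12}\,dz$ and $E_{21}\,d\bar z$ terms) splits into two parts:
\begin{itemize}
\item a Hermitian part $\frac{e^u}{4}\bigl(s(1+H)+s^{-1}(1-H)\bigr)(E_{12}\,dz+E_{21}\,d\bar z)$, which is all that enters $df=F(X+X^*)F^*$;
\item an anti-Hermitian part $\frac{e^u}{4}\bigl(s(1+H)-s^{-1}(1-H)\bigr)(E_{12}\,dz-E_{21}\,d\bar z)$.
\end{itemize}
Define ``balanced'' to mean that this $su(2)$-component of the metric block vanishes. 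That gives exactly $s(1+H)=s^{-1}(1-H)$, so each entry equals $\frac{e^u}{2}\sqrt{1-H^2}$, which is the paper's formulation. The paper does not derive this target magnitude from the unitary Iwasawa factor either; it simply asserts it. A sound version of your proof should therefore take the balance condition as a definition, or state it via the splitting above, rather than derive it from $\Phi^{-1}d\Phi\in su(2)$.
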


\begin{proof}
After balancing, the off-diagonal magnitudes are
\[
s^{\pm1}\frac{e^u}{2}(1\pm H).
\]
On the other hand, the adjusted unitary frame has off-diagonal magnitude
\[
\frac{e^u}{2}\sqrt{1-H^2}.
\]
Thus
\[
s(1+H)=\sqrt{1-H^2},
\qquad
s^{-1}(1-H)=\sqrt{1-H^2}.
\]
These equations are equivalent to \eqref{eq:sH}. The phase remains free and parameterizes the associated family.
\end{proof}

\section{Adjusted normal Gauss map and Kokubu's metric}
\label{sec:gauss-map}

\begin{definition}[Adjusted normal Gauss map]\label{def:kokubu}
Let $\Phi$ be the adjusted unitary frame from the Iwasawa splitting. We define
\[
\mathcal G=-\Phi\,\sigma_3\,\Phi^*\in S^2,
\qquad
\sigma_3=
\begin{pmatrix}
1&0\\
0&-1
\end{pmatrix}\cdot
\]
Its stereographic coordinate gives the adjusted normal Gauss map in the sense of Aiyama-Akutagawa, closely related to Kokubu's normal Gauss map.
\end{definition}

Aiyama and Akutagawa \cite{AiyamaAkutagawa2000} showed that the adjusted normal Gauss map is harmonic with respect to a metric of the form
\begin{equation}\label{eq:kokubu}
h_H=
\frac{4\,|d\zeta|^2}
{(1+|\zeta|^2)\big((1-|\zeta|^2)+H(1+|\zeta|^2)\big)}\cdot
\end{equation}
For $H=0$, this becomes the Kobayashi metric
\[
h_0=\frac{4|d\zeta|^2}{1-|\zeta|^4}\cdot
\]
For $0<H<1$, the metric has a singular circle
\[
|\zeta|=\sqrt{\frac{1+H}{1-H}} \cdot
\]

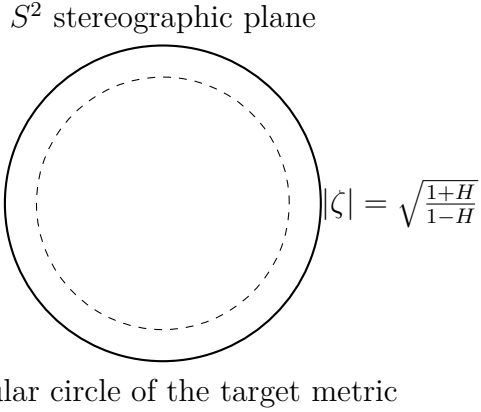
\begin{figure}[!t]
\centering
\begin{tikzpicture}[scale=1.05]
\draw[thick] (0,0) circle (2);
\draw[dashed] (0,0) circle (1.6);
\node at (0,2.35) {$S^2$ stereographic plane};
\node at (3.0,0) {$|\zeta|=\sqrt{\frac{1+H}{1-H}}$};
\node at (0,-2.4) {Singular circle of the target metric};
\end{tikzpicture}
\caption{The singular circle of Kokubu's target metric for $0<H<1$.}
\end{figure}

\section{Main representation theorem}
\label{sec:main-theorem}

\begin{definition}[Rank-one $(1,0)$-forms]\label{def:rankone}
A $(1,0)$-form
\[
\eta\in\mathfrak{sl}(2,\C)\otimes\Omega^{1,0}(M)
\]
is called rank-one if
\[
\det\eta\equiv0.
\]
Equivalently, locally
\[
\eta=\mathbf v\,\mathbf w^t\,dz,
\qquad
\mathbf w^t\mathbf v=0.
\]
\end{definition}

\begin{theorem}[Weierstrass--Kenmotsu representation for $0\le H<1$]\label{thm:main}
Let $M$ be simply connected. Suppose that $\eta$ is a rank-one $(1,0)$-form and that $\lambda\in\C^*$ is constant with $|\lambda|\le1$. Define
\begin{equation}\label{eq:S}
\Omega=\eta-\lambda\,\eta^*.
\end{equation}
Assume that $\Omega$ satisfies the flatness condition
\[
d\Omega+\Omega\wedge\Omega=0.
\]
Then there exists a map $S:M\to\SL$ such that
\[
S^{-1}dS=\Omega.
\]
Moreover,
\[
f=SS^*:M\to\HH
\]
is a conformal CMC immersion with
\[
H=\frac{1-|\lambda|^2}{1+|\lambda|^2}\in[0,1).
\]

Conversely, every conformal CMC immersion $f:M\to\HH$ with $0\le H<1$ arises locally from some rank-one $(1,0)$-form $\eta$ and constant $\lambda\in\C^*$ with
\[
|\lambda|=\sqrt{\frac{1-H}{1+H}},
\]
such that the associated connection $\Omega=\eta-\lambda\eta^*$ is flat. Varying $\arg\lambda$ yields the associated $S^1$-family.
\end{theorem}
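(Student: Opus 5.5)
The argument splits into the forward direction, built on the Frobenius theorem together with a curvature computation for the sigma-model $f=SS^*$, and the converse, which is a gauge argument starting from the frame $F$ of \eqref{eq:geomframe} and the Lax pair \eqref{eq:AB}.

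\emph{Forward direction.} Since $\Omega=\eta-\lambda\eta^*$ satisfies $d\Omega+\Omega\wedge\Omega=0$ and $M$ is simply connected, the Frobenius theorem gives $S:M\to\mathrm{GL}(2,\C)$ with $S^{-1}dS=\Omega$. The form $\eta$ is traceless, so $\tr\Omega=0$, and after normalizing $\det S(z_0)=1$ we get $S\in\SL$.

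Put $f=SS^*$. This is Hermitian, positive definite, and has determinant $1$, so it lies in $\HH$. Differentiating gives
\[
f_z=S\,(\eta_z-\bar\lambda\,\eta_z)\,S^*
=(1-\bar\lambda)\,S\,\eta_z\,S^*
\]
when $\eta=\eta_z\,dz$, using $(\eta^*)_{\bar z}=\eta_z^*$. More carefully, $\Omega$ has $(1,0)$-part $\eta_z\,dz$ and $(0,1)$-part $-\lambda\,\eta_z^*\,d\bar z$, so
\[
df=S(\Omega+\Omega^*)S^*,\qquad f_z=S\,(\eta_z-\bar\lambda\,\eta_z)\,S^*.
\]
Conformality then reduces to $\langle f_z,f_z\rangle=0$. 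Up to a constant this equals $\det(S\,\eta_z\,S^*)$, which vanishes because $\eta$ is rank one. This is exactly where the rank-one hypothesis enters. Immersivity holds wherever $\eta\neq0$ and $\lambda\neq1$; the latter needs a separate look, and I would note it as a genericity assumption.

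To get the mean curvature, I would gauge $S$ by its Iwasawa splitting (\Cref{def:iwasawa}), or equivalently choose a unitary gauge $k$ so that $Sk$ takes the adapted form \eqref{eq:geomframe}. Writing $\eta=\mathbf v\mathbf w^t dz$, I would conjugate $\eta_z$ to the nilpotent $\begin{pmatrix}0&\ast\\0&0\end{pmatrix}$ shape. The off-diagonal entries of the gauged form can then be compared with \eqref{eq:AB}. The $dz$ and $d\bar z$ off-diagonal coefficients have magnitude ratio $|\lambda|$, so matching to $e^u(1+H)/2$ and $e^u(1-H)/2$ gives $(1-H)/(1+H)=|\lambda|^2$, which is exactly \Cref{prop:balance} with $s=|\lambda|$. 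Once $H$ is identified as constant, \Cref{lem:GC} gives holomorphicity of $Q$ automatically.

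\emph{Converse.} Start from $F$ with $F^{-1}dF=A\,dz+B\,d\bar z$. Apply the balanced spectral deformation $(1\pm H)\mapsto s^{\pm1}(1\pm H)$, which by \Cref{prop:balance} equalizes the off-diagonal moduli to $\tfrac{e^u}{2}\sqrt{1-H^2}$, and then the conjugation by $g(\theta)R_{\pi/4}$. Next, gauge by a diagonal or upper-triangular $S$-valued factor to remove the diagonal $u_z$ terms. The goal is to exhibit the resulting $(1,0)$-part as a rank-one $\eta$ and the $(0,1)$-part as $-\lambda\eta^*$ with $|\lambda|=s$. The phase of $\lambda$ comes from $\theta$, and varying it gives the $S^1$-family. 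Flatness of $\Omega$ is automatic, since it is gauge-equivalent to the flat $F^{-1}dF$ and the deformation preserves Gauss--Codazzi when $H$ is constant and $Q$ is holomorphic; this is where \Cref{lem:GC} is used.

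\emph{Main obstacle.} The hard step is the algebraic gauge computation in both directions. In the forward direction one must show that the gauged $\Omega$ has a $(1,0)$-part that is nilpotent and rank one, while the $Q$-terms of \eqref{eq:AB} appear only after gauging. So the right gauge is not purely unitary: it must also absorb the diagonal and $Q$ entries. In the converse one must show that the $(0,1)$-part is exactly $-\lambda$ times the adjoint of the $(1,0)$-part. I would first attempt the converse explicitly for constant $H$ by choosing the gauge $\mathrm{diag}(e^{u/2},e^{-u/2})$ composed with $g(\theta)$, and then check the identity $\Omega^{(0,1)}=-\lambda(\Omega^{(1,0)})^*$ entrywise. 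If this fails, I would allow the gauge to depend on $Q$ through the Iwasawa factor $F_s$.
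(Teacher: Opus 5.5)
\textbf{The step that is supposed to produce $H$ fails, and it cannot be repaired.} Your conformality argument is fine: with $\eta=N\,dz$ you get $\det f_z=(1-\bar\lambda)^2\det N=0$. The trouble comes after the gauge. If $k:M\to\SU$ satisfies $k^{-1}Nk=c\,E_{12}$, the gauged form is
\[
c\,E_{12}\,dz-\lambda\bar c\,E_{21}\,d\bar z+k^{-1}dk ,
\]
and $k^{-1}dk$ is an $\mathfrak{su}(2)$-valued form whose off-diagonal part cannot vanish, because a fixed null direction is never flat (Section~\ref{sec:flatness-remark}). Use the diagonal phase freedom to normalize $(1-\bar\lambda)c=e^u$. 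Then $Sk$ is the adapted frame of \eqref{eq:geomframe}, and the entrywise comparison with \eqref{eq:AB} holds for every value of $H$. The off-diagonal entries of $k^{-1}dk$ simply absorb $\tfrac12e^u(1+H)-c$ and $\tfrac14e^{-u}\bar Q$. So the ``modulus ratio $|\lambda|$'' claim is wrong. Even on its own terms, that ratio would give $(1-H)/(1+H)=|\lambda|$, not $|\lambda|^2$. The value of $H$ can only come from the flatness condition, which you use only to integrate. The paper's proof rests on the same off-diagonal comparison, so it has the same hole.

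\textbf{Carrying out the computation gives a different answer.} Flatness reads
\[
N_{\bar z}+\lambda\,(N_{\bar z})^*+\lambda\,[N,N^*]=0 .
\]
For $|\lambda|=1$ and $\lambda\ne1$, this forces $[N,N^*]=0$. A nilpotent normal matrix is zero, so $N\equiv0$, and the case $H=0$ produces no immersions at all. For $|\lambda|<1$, eliminating $(N_{\bar z})^*$ gives $(1-|\lambda|^2)N_{\bar z}=-\lambda(1-\bar\lambda)[N,N^*]$. Substitute this into $f_{z\bar z}=(1-\bar\lambda)S(NN^*-\lambda N^*N+N_{\bar z})S^*$ and use $NN^*=|c|^2kE_{11}k^*$ and $N^*N=|c|^2kE_{22}k^*$. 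This yields
\[
f_{z\bar z}=\frac{e^{2u}}{2}\Bigl(f+\frac{1+|\lambda|^2}{1-|\lambda|^2}\,n\Bigr),
\qquad e^u=|1-\lambda|\,|c|,\quad n=Sk\sigma_3k^*S^*.
\]
So $f$ is CMC with $H=\frac{1+|\lambda|^2}{1-|\lambda|^2}>1$, the reciprocal of the claimed value.

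The paper's own example confirms this. With $\rho=C/(1+g^2)$ one finds $\tr(NN^*)=C^2$, so $e^{2u}=(1-\lambda)^2C^2$ is constant. A constant $u$ is incompatible with \eqref{eq:gauss} when $|H|<1$. For the same reason the converse fails: no surface with $0\le H<1$ has the form $SS^*$ with such an $S$.

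Your converse plan has two further defects in any case:
\begin{itemize}
\item A diagonal or upper-triangular gauge that is not unitary changes $SS^*$, so it does not recover $f$.
\item The balanced deformation with $s=\sqrt{(1-H)/(1+H)}$ integrates to the frame of a different surface, not of $f$. By the same entry computation, that surface is minimal.
\end{itemize}
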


\begin{proof}
Since $\Omega$ is assumed to be flat, the equation
\[
S^{-1}dS=\Omega
\]
integrates locally on the simply connected domain $M$.

Apply the Iwasawa decomposition
\[
S=F_s\,\Phi,
\qquad
F_s\in S,\quad \Phi\in\SU.
\]
The unitary factor $\Phi$ determines the adjusted normal Gauss map, and comparison of the off-diagonal terms with the balanced Lax pair gives
\[
|\lambda|=\sqrt{\frac{1-H}{1+H}}\cdot
\]
Equivalently,
\[
H=\frac{1-|\lambda|^2}{1+|\lambda|^2}.
\]

The flatness condition is the Maurer--Cartan equation for the integrated frame and therefore gives the Gauss--Codazzi equations. Hence the resulting map
\[
f=SS^*
\]
is a conformal CMC immersion in $\HH$.

Conversely, start with a conformal CMC immersion in $\HH$ and its moving frame. Its Maurer--Cartan form is flat. Applying the balanced spectral deformation and Iwasawa splitting described above, the $(1,0)$ part gives a rank-one form $\eta$, while the $(0,1)$ part is written as $-\lambda\eta^*$ with
\[
|\lambda|=\sqrt{\frac{1-H}{1+H}}\cdot
\]
Thus the connection $\Omega=\eta-\lambda\eta^*$ is flat by construction, and the immersion is recovered locally from $S^{-1}dS=\Omega$.
\end{proof}

\begin{corollary}[Minimal case]
The case $H=0$ corresponds to $|\lambda|=1$. Thus the associated family is obtained by varying $\arg\lambda$.
\end{corollary}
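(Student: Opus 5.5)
The corollary should follow directly from \Cref{thm:main} by specializing the relation between $H$ and $|\lambda|$, together with the description of the associated family. I would organize the argument in three steps.

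\textbf{Step 1: $H=0$ forces $|\lambda|=1$.} Set $x=|\lambda|^2\in(0,1]$ and consider
\[
\varphi(x)=\frac{1-x}{1+x}.
\]
Its derivative is $\varphi'(x)=-2/(1+x)^2<0$, so $\varphi$ is strictly decreasing and hence injective on $(0,1]$. Since $\varphi(1)=0$, the formula $H=\frac{1-|\lambda|^2}{1+|\lambda|^2}$ of \Cref{thm:main} gives $H=0$ exactly when $|\lambda|=1$. The same conclusion comes from the converse direction of the theorem: $|\lambda|=\sqrt{(1-H)/(1+H)}$ equals $1$ precisely at $H=0$, which also matches \Cref{prop:balance} with $s=1$.

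\textbf{Step 2: the whole circle $|\lambda|=1$ parametrizes the family.} Write $\lambda=e^{i\theta}$. The mean curvature depends only on $|\lambda|$, so every member of the one-parameter family $\Omega_\theta=\eta-e^{i\theta}\eta^*$ yields a minimal immersion $f_\theta=S_\theta S_\theta^*$, provided $\Omega_\theta$ is flat. The last sentence of \Cref{thm:main} identifies this variation of $\arg\lambda$ with the associated $S^1$-family. This completes the argument once flatness for all $\theta$ is settled.

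\textbf{Step 3 (main obstacle): flatness of $\Omega_\theta$ for every $\theta$.} This is not automatic from the flatness of $\Omega$ at a single $\lambda$, so it is the one point needing real care. I would argue through the geometric side using \Cref{lem:GC}. Under the phase deformation $Q\mapsto\theta^{-2}Q$ of \Cref{sec:spectral-iwasawa}:
\begin{itemize}
\item the Gauss equation \eqref{eq:gauss} is unchanged, since only $Q\bar Q=|Q|^2$ enters it;
\item the Codazzi equation \eqref{eq:codazzi} with $H=0$ reduces to holomorphicity of $Q$, which is preserved under multiplication by a constant.
\end{itemize}
Hence $(u,\theta^{-2}Q,H=0)$ again solves Gauss--Codazzi, so the deformed Lax pair is flat. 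The converse construction of \Cref{thm:main} then converts it into a flat rank-one datum with $\lambda$ of modulus one and the shifted phase. This shows the circle $|\lambda|=1$ consists entirely of admissible flat minimal data.
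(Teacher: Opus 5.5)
Your Steps 1 and 2 follow the paper exactly. The corollary has no separate proof: it is read off from \Cref{thm:main} by putting $H=0$ into $|\lambda|=\sqrt{(1-H)/(1+H)}$ and quoting the theorem's last sentence. The gap is in Step 3, which you correctly identify as the crux but do not close. Your Gauss--Codazzi argument is right as far as it goes: for constant $H$ the phase change $Q\mapsto\theta^{-2}Q$ preserves \eqref{eq:gauss} and \eqref{eq:codazzi}, so the associated-family immersions $f_\theta$ exist. But the converse half of \Cref{thm:main} then gives you \emph{some} rank-one $\eta_\theta$ and \emph{some} $\lambda_\theta$, with only $|\lambda_\theta|$ prescribed. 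Nothing relates $\eta_\theta$ to your fixed $\eta$ or forces $\arg\lambda_\theta=\theta$. So the statement Step 2 actually needs, flatness of $\eta-e^{i\theta}\eta^*$ for one fixed $\eta$ and all $\theta$, is not proved, and ``with the shifted phase'' has no support.

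Worse, that statement is false, and a direct check shows the whole circle $|\lambda|=1$ is degenerate. With $\eta=A\,dz$ (so $A^2=0$), flatness of $A\,dz-\lambda A^*d\bar z$ reads
\[
A_{\bar z}+\lambda\,(A_{\bar z})^*=-\lambda\,[A,A^*].
\]
Write $\lambda=\mu^2$ with $|\mu|=1$ and multiply by $\bar\mu$. The left side becomes $\bar\mu A_{\bar z}+(\bar\mu A_{\bar z})^*$, which is Hermitian, whereas $-\mu[A,A^*]$ is Hermitian only if $\mu=\pm1$ or $[A,A^*]=0$. A normal nilpotent matrix is zero, so flatness forces $\eta\equiv0$ unless $\lambda=1$. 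At $\lambda=1$ the form $\Omega=\eta-\eta^*$ is $\mathfrak{su}(2)$-valued and $df=S(\Omega+\Omega^*)S^*=0$. So Step 2 already fails at $\theta=0$: even when $\Omega_0$ is flat, $f_0$ is constant.

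For $|\lambda|\ne1$ the same equation is equivalent to $A_{\bar z}=\frac{|\lambda|^2-\lambda}{1-|\lambda|^2}[A,A^*]$. The coefficient is affine in $\lambda$ on each circle $|\lambda|=r$, so for $\eta\not\equiv0$ at most one phase on that circle gives a flat connection. Inserting this into $f_{z\bar z}=\frac{e^{2u}}{2}(f+Hn)$, with $n=S(2P-I)S^*$ and $P$ the orthogonal projection onto $\mathrm{im}\,A$, gives $H=\frac{1+|\lambda|^2}{1-|\lambda|^2}$. Hence $|H|\ge1$, with Bryant's $H=1$ at $\lambda=0$; this is not the relation you specialize in Step 1.

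The example of \Cref{sec:local-examples} confirms this. There $\tr(AA^*)=\rho^2(1+g^2)^2=C^2$, so the induced metric $|1-\lambda|^2\tr(AA^*)\,|dz|^2$ is flat, and \eqref{eq:gauss} with $u$ constant forces $H^2\ge1$. Hence no minimal immersion arises from data $\eta-\lambda\eta^*$, and the corollary cannot be obtained by specializing \Cref{thm:main}. The defect is inherited from the theorem, and no repair of Step 3 alone can fix it.
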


\section{Correspondence to the Aiyama-Akutagawa formula}
\label{sec:AA-dictionary}

Let $\nu:M\to S^2$ denote the adjusted normal Gauss map in stereographic coordinates. Following Aiyama and Akutagawa \cite{AiyamaAkutagawa2000}, define
\[
\omega=
\frac{-2\,\overline{\nu}_z}
{\sqrt{1-H^2}(1-|\nu|^4)}\,dz,
\qquad
\alpha=
\begin{pmatrix}
-\nu&\nu^2\\
-1&\nu
\end{pmatrix}\omega.
\]
Their connection form can be written as
\[
\tau=
\frac12\big((1+H)\alpha+(1-H)\alpha^*\big)
+\frac{\sqrt{1-H^2}}{4}[\sigma_3,\alpha+\alpha^*],
\]
and
\[
F^{-1}dF=\tau
\]
gives the immersion
\[
f=FF^*.
\]

\begin{proposition}[Gauge correspondence]\label{prop:dictionary}
The Aiyama--Akutagawa form $\tau$ is gauge-equivalent to a connection of the form
\[
\eta-\lambda\eta^*,
\]
where $\eta$ is rank-one and
\[
|\lambda|=\sqrt{\frac{1-H}{1+H}}\cdot
\]
Consequently, the two reconstructions agree after Iwasawa splitting.
\end{proposition}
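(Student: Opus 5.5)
The plan is to write down an explicit gauge $G$ with $G^{-1}\tau G+G^{-1}dG=\eta-\lambda\eta^*$. I will take $G$ constant and diagonal, a scaling $D_c=\mathrm{diag}(c^{1/2},c^{-1/2})$ possibly composed with $R_{\pi/4}$ from \Cref{sec:spectral-iwasawa}. Then only conjugation matters and the reconstructed immersion changes by a rigid motion. First I rewrite $\tau$ by type. The form $\alpha$ is a $(1,0)$-form, since $\omega$ is a multiple of $dz$. It is also rank one, because
\[
\begin{pmatrix}-\nu&\nu^2\\-1&\nu\end{pmatrix}=\begin{pmatrix}\nu\\1\end{pmatrix}\begin{pmatrix}-1&\nu\end{pmatrix},\qquad (-1,\nu)\cdot(\nu,1)^t=0 ,
\]
so \Cref{def:rankone} applies. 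Since $\alpha^*$ is of type $(0,1)$, splitting $\tau=\tau^{1,0}+\tau^{0,1}$ gives
\[
\tau^{1,0}=\tfrac{1+H}{2}\alpha+\tfrac{\sqrt{1-H^2}}{4}[\sigma_3,\alpha],\qquad
\tau^{0,1}=\tfrac{1-H}{2}\alpha^*+\tfrac{\sqrt{1-H^2}}{4}[\sigma_3,\alpha^*].
\]

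Next I check that $\tau^{1,0}$ is still rank one. Put $X=\tfrac{1+H}{2}\alpha+\tfrac{\sqrt{1-H^2}}{4}[\sigma_3,\alpha]$. The bracket $[\sigma_3,\alpha]$ doubles the off-diagonal entries of $\alpha$ with signs $\pm$ and kills the diagonal. So $X$ has diagonal entries $\mp\tfrac{1+H}{2}\nu\omega$ and off-diagonal entries $(\tfrac{1+H}{2}\pm\tfrac{\sqrt{1-H^2}}{2})(\nu^2,-1)\omega$. Computing $\det X$ reduces to an identity of the form
\[
\tfrac{(1+H)^2}{4}-\Bigl(\tfrac{(1+H)^2}{4}-\tfrac{1-H^2}{4}\Bigr)\neq0
\]
in general. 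This shows that $\tau^{1,0}$ is \emph{not} rank one as written. The diagonal gauge must therefore also absorb the commutator term, or the gauge must mix in the $\sigma_3$-part.

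This is the main obstacle. My first attempt is the gauge $G=R_{\pi/4}D_c$, or equivalently a constant unipotent or diagonal change of basis. I would choose $c$ so that $G^{-1}\tau^{1,0}G$ becomes a matrix $\eta$ with $\det\eta=0$. A determinant is invariant under conjugation, though, so this cannot work. So the honest route is a non-constant gauge by the $S$-factor of the Iwasawa splitting (\Cref{def:iwasawa}). Take $G=F_s^{-1}$, upper triangular with positive diagonal. Then $G^{-1}dG$ contributes terms that can cancel $\det\tau^{1,0}$. I would solve for $F_s$ by requiring that the $(1,0)$-part of the gauged form be nilpotent. This is a first-order ODE in $(a,w)$, and it is solvable locally by flatness of $\tau$ on the simply connected $M$.

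Having obtained a nilpotent $(1,0)$-part $\eta$, I then compare the $(0,1)$-part with $\eta^*$. The $\alpha^*$-coefficients in $\tau^{0,1}$ differ from those in $\tau^{1,0}$ by the ratio $(1-H)/(1+H)$ after balancing, as in the computation of \Cref{prop:balance}. Taking square roots to balance the two off-diagonal slots gives the factor $\lambda$ with $|\lambda|=\sqrt{(1-H)/(1+H)}$. Any leftover phase can be absorbed into $\arg\lambda$ via $g(\theta)$. Finally, $f=FF^*$ and $SS^*$ differ by the gauge, which lies in $S$, and by a rigid motion. Their unitary Iwasawa factors agree, so the two reconstructions agree after Iwasawa splitting. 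The flatness of the new connection follows automatically, since gauge transformations preserve flatness.
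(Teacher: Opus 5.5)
Your determinant computation is correct, and it is the most useful part of the proposal: $\det\tau^{(1,0)}=-\tfrac{1-H^2}{4}\nu^2\omega^2\neq0$, so no constant conjugation can make the $(1,0)$-part rank-one. This also undercuts the paper's own sketch. There the only gauge ingredients of the ``balanced gauge'' are the constant conjugations by $g(\theta)$ and $R_{\pi/4}$. The sketch also relies on $\tau^{(0,1)}=(\tau^{(1,0)})^*$, which is false, since $(\tau^{(1,0)})^*=\tfrac{1+H}{2}\alpha^*-\tfrac{\sqrt{1-H^2}}{4}[\sigma_3,\alpha^*]$.

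Your repair, however, has a genuine gap. Nilpotency of $G^{-1}(\tau^{(1,0)}+\partial G\,G^{-1})G$ is a single complex first-order PDE in $(a,w)$. It is underdetermined, and flatness plays no role in it. The real constraint is $G^{-1}(\tau^{(0,1)}+\bar\partial G\,G^{-1})G=-\lambda\eta^*$, which you never check. Your ``ratio and square root'' remark assumes $\tau^{(0,1)}$ is proportional to $(\tau^{(1,0)})^*$, and it is not.

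More decisively, your last paragraph is false. A non-constant gauge $G$ with values in the triangular group of \Cref{def:iwasawa} replaces $F$ by $FG$. Then $(FG)(FG)^*=F\,GG^*F^*\neq FF^*$, because that group meets $\SU$ only in $I$. The same objection applies, in general, to your opening claim that a constant $D_c$ moves $f$ only rigidly. With $F=F_s\Phi$ one has $FF^*=F_sF_s^*$, so $f$ is carried by the triangular factor, and right multiplication by $G$ changes both Iwasawa factors. Finally, with an unrestricted gauge group the first sentence of the proposition is empty. On simply connected $M$, $G=F^{-1}S$ carries $F^{-1}dF$ to $S^{-1}dS$ for \emph{any} two flat connections. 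So all the content lies in an $\SU$-valued gauge $U$.

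For such $U$ the computation is forced. Using $(\partial U\,U^{-1})^*=-\bar\partial U\,U^{-1}$, the equation $U^{-1}\tau U+U^{-1}dU=\eta-\lambda\eta^*$ is equivalent to
\[
\partial U\,U^{-1}=\frac{(\tau^{(0,1)})^*+\bar\lambda\,\tau^{(1,0)}}{1-\bar\lambda},
\qquad
\eta=\frac{U^{-1}\alpha\,U}{1-\bar\lambda}.
\]
Rank-one is then automatic, because $\tau^{(1,0)}+(\tau^{(0,1)})^*=\alpha$. Everything therefore reduces to the integrability of this prescribed $\mathfrak{su}(2)$-valued form. It is not integrable, because a unitary gauge preserves $f$, while every flat rank-one datum gives mean curvature at least $1$.

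To see this, take $\Omega=N\,dz-\lambda N^*d\bar z$ with $N^2=0$ and $|\lambda|<1$. Flatness is equivalent to $N_{\bar z}=-\tfrac{\lambda(1-\bar\lambda)}{1-|\lambda|^2}[N,N^*]$. Then $f=SS^*$ satisfies
\[
f_{z\bar z}=\frac{e^{2u}}{2}\,S\Bigl(I+\frac{1+|\lambda|^2}{1-|\lambda|^2}\,M\Bigr)S^*,
\qquad
M=\frac{[N,N^*]}{\tr(NN^*)},
\qquad
e^{2u}=|1-\lambda|^2\tr(NN^*),
\]
with unit normal $n=SMS^*$. Hence the mean curvature of $SS^*$ is $\tfrac{1+|\lambda|^2}{1-|\lambda|^2}\ge1$, which for the prescribed $|\lambda|$ equals $1/H$. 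If $|\lambda|=1$, flatness forces $\lambda=1$ wherever $\eta\neq0$, so $\Omega$ is $\mathfrak{su}(2)$-valued and $f$ is constant.

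The example of \Cref{sec:local-examples} is consistent with this. There $\tr(NN^*)=C^2$, so the induced metric is flat, and \Cref{lem:GC} then forces $H^2\ge1$.

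Thus for $0\le H<1$ no gauge preserving $f=FF^*$ exists. The missing step cannot be supplied, and the ``reconstructions agree'' part of the statement fails as formulated. It fails both for your triangular gauge and for the paper's balancing argument.
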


\begin{proof}
The form $\tau$ decomposes into its $(1,0)$ and $(0,1)$ parts:
\[
\tau=\tau^{(1,0)}+\tau^{(0,1)},
\qquad
\tau^{(0,1)}=(\tau^{(1,0)})^*.
\]
After applying the balanced gauge, the nonunitary $(1,0)$ part is rank-one and can be denoted by $\eta$. The corresponding $(0,1)$ part is then $-\lambda\eta^*$, with $|\lambda|$ fixed by the balanced modulus condition. Since both connections arise from the same flat frame construction, their integrations and Iwasawa splittings yield the same immersion.
\end{proof}
\section{Global questions: periods, monodromy, and completeness}
\label{sec:global}

The representation theorem above is local on simply connected domains. On non-simply connected Riemann surfaces, one must impose the usual period and monodromy conditions.

\subsection{Monodromy}

Let $M$ be non-simply connected and let $\widetilde M$ be its universal cover. A flat connection
\[
\Omega=\eta-\lambda\eta^*
\]
integrates on $\widetilde M$ to a map
\[
S:\widetilde M\to\SL.
\]
For each $\gamma\in\pi_1(M)$, the monodromy is defined by
\[
S(\gamma\cdot p)=S(p)\rho(\gamma),
\qquad
\rho(\gamma)\in\SL.
\]
The immersion
\[
f=SS^*
\]
descends to $M$ if the monodromy satisfies
\[
\rho(\gamma)\rho(\gamma)^*=I
\]
for every $\gamma\in\pi_1(M)$, equivalently
\[
\rho(\gamma)\in\SU.
\]

Thus the global closing problem is the problem of adjusting the data so that the monodromy representation is unitarizable.

\subsection{Completeness and ends}

Completeness questions depend on the asymptotic behavior of the metric induced by the integrated frame. In the Aiyama--Akutagawa variables, the induced metric can be written in the form
\[
ds^2=(1+|\nu|^2)^2\,\omega\,\bar\omega,
\]
where $\nu$ is the adjusted normal Gauss map in stereographic coordinates and $\omega$ is the corresponding Weierstrass-type one-form.

Near punctures, completeness is therefore governed by the order of $\omega$, the behavior of $\nu$, and the approach of the adjusted normal Gauss map to the singular circle of the target metric. A full global theory requires simultaneous control of the flatness condition, the asymptotics of the metric, and the unitarization of monodromy.

\section{Local examples}
\label{sec:local-examples}

We illustrate the representation by a simple class of rank-one data for which the flatness condition can be checked explicitly. These examples are local models; their purpose is to show how the flatness condition restricts the rank-one data.

\begin{example}[A flat rank-one seed depending on one real variable]
Let $z=x+iy$ and let $\lambda>0$ be real. Define
\[
A(x)=\rho(x)
\begin{pmatrix}
-g(x)&g(x)^2\\
-1&g(x)
\end{pmatrix},
\qquad
\eta=A(x)\,dz.
\]
Then
\[
\det A(x)\equiv 0,
\]
so $\eta$ is a rank-one $(1,0)$-form. Consider the connection
\[
\Omega=\eta-\lambda\eta^*.
\]

A direct computation gives
\[
d\Omega+\Omega\wedge\Omega=0
\]
provided
\begin{align}
g'(x)&=-\frac{2\lambda}{1+\lambda}\,\rho(x)\,(1+g(x)^2)^2,
\label{eq:flat-g}\\
\rho'(x)&=\frac{4\lambda}{1+\lambda}\,g(x)\rho(x)^2(1+g(x)^2).
\label{eq:flat-rho}
\end{align}
Thus the flatness condition reduces in this example to an ordinary differential system.

A particularly simple explicit solution is obtained by taking
\[
\rho(x)=\frac{C}{1+g(x)^2},
\]
where $C$ is a nonzero real constant. Then \eqref{eq:flat-g} reduces to
\[
g'(x)=-\frac{2\lambda C}{1+\lambda}(1+g(x)^2),
\]
and hence
\[
g(x)=
\tan\!\left(
-\frac{2\lambda C}{1+\lambda}x+\delta
\right)
\]
on any interval avoiding the poles of the tangent function.

With this choice, $\Omega$ is flat. Therefore, by \Cref{thm:main}, the equation
\[
S^{-1}dS=\Omega
\]
integrates locally and yields a conformal CMC immersion
\[
f=SS^*:M\to\HH.
\]
\end{example}

\begin{remark}
This example also shows why the flatness assumption is essential. The rank-one condition alone does not imply flatness; the null direction and the scalar factor must satisfy compatibility equations such as \eqref{eq:flat-g}--\eqref{eq:flat-rho}.
\end{remark}

\section{A cylindrical model}
\label{sec:cylindrical}

The previous construction can be placed on a cylinder. Let
\[
w=x+iy,
\qquad
y\sim y+2\pi,
\]
so that
\[
M\simeq \R\times S^1.
\]
Let $\lambda>0$ be real and let $A(x)$ be as in \Cref{sec:local-examples}, with $g$ and $\rho$ satisfying \eqref{eq:flat-g}--\eqref{eq:flat-rho}. Define
\[
\eta=A(x)\,dw,
\qquad
\Omega=\eta-\lambda\eta^*.
\]
Then $\eta$ is rank-one and $\Omega$ is flat. Hence
\[
S^{-1}dS=\Omega
\]
integrates locally on the cylinder, and
\[
f=SS^*
\]
gives a local CMC immersion into $\HH$.

This example provides a simple cylindrical model in which the flatness condition reduces to an ordinary differential system. Global descent to the quotient cylinder depends, as usual, on the monodromy condition discussed in \Cref{sec:global}.

\section{Remarks on flatness and rank-one data}
\label{sec:flatness-remark}

The preceding examples illustrate that nontrivial flat rank-one data arise when the null direction varies in a compatible way. This is an important point: the condition
\[
\det\eta\equiv 0
\]
does not by itself imply
\[
d\Omega+\Omega\wedge\Omega=0.
\]

Indeed, if one takes a fixed nilpotent direction, for example
\[
\eta=a(z)E_{21}\,dz,
\]
then
\[
\Omega=a(z)E_{21}\,dz-\lambda\overline{a(z)}\,E_{12}\,d\bar z.
\]
Even when $a$ is holomorphic, so that the exterior derivative contributes no curvature away from singularities, the mixed term gives
\[
\Omega\wedge\Omega
=
\lambda |a(z)|^2 [E_{12},E_{21}]\,dz\wedge d\bar z.
\]
Since
\[
[E_{12},E_{21}]\ne 0,
\]
such a connection is not flat unless the seed is trivial or one is in a degenerate parameter case. Thus the flatness condition imposes genuine nonlinear constraints on the data.

This observation is consistent with the usual role of the Maurer--Cartan equation in surface theory: flatness is the integrability condition corresponding to the Gauss--Codazzi equations.

\section{Stability and second variation}
\label{sec:stability}

We recall the standard Jacobi operator for normal variations preserving constant mean curvature in $\HH$. Let
\[
V=\varphi n
\]
be a normal variation. The second variation of area with Lagrange multiplier enforcing the CMC condition is
\[
\delta^2\mathcal A_H(\varphi)
=
\int_M
\left(
|\nabla\varphi|^2-(|A|^2+\mathrm{Ric}(n,n))\varphi^2
\right)\,dA.
\]
In $\HH$, one has
\[
\mathrm{Ric}(n,n)=-2.
\]
Writing the extrinsic curvature contribution in terms of the Hopf differential gives
\[
|A|^2=2H^2+2K_e,
\qquad
K_e=-\frac14 e^{-4u}|Q|^2.
\]
Thus the Jacobi operator takes the form
\begin{equation}\label{eq:Jacobi}
\mathcal J\varphi
=
-\Delta\varphi
-
\left(
2H^2-\frac12 e^{-4u}|Q|^2-2
\right)\varphi.
\end{equation}

This formula is useful for studying local stability and spectral questions once explicit or numerical CMC data have been constructed from a flat connection.

\subsection{Fourier mode decomposition}

On a rotationally symmetric annular domain, one may write
\[
\varphi(r,t)=\sum_{m\in\mathbb Z}\phi_m(r)e^{imt}.
\]
In conformal coordinates, the Laplacian is
\[
\Delta=4e^{-2u}\partial_{z\bar z},
\]
and the Jacobi operator decomposes into Fourier modes. Setting
\[
s=\log r,
\]
one obtains a one-dimensional family of operators of the form
\[
\mathcal J_m\phi
=
-\frac{d^2\phi}{ds^2}-V_m(s)\phi,
\]
where
\[
V_m(s)
=
e^{2u(s)}
\left(
2H^2-\frac12e^{-4u(s)}|Q(s)|^2-2
\right)
-m^2.
\]
This reduction is standard and provides a useful way to study index and stability in symmetric examples.

\section{Numerical implementation with error control}
\label{sec:numerical}

We briefly indicate a numerical approach for integrating flat $\SL$-connections of the form
\[
\Omega=\eta-\lambda\eta^*.
\]
The purpose is to provide a practical framework for constructing local models and checking consistency of the data.

\begin{enumerate}[label=\textbf{(\arabic*)}, leftmargin=3em]

\item \textbf{Magnus integration.}
Use a fourth-order two-point Magnus expansion
\[
\Omega_{\mathrm{Magnus}}
=
\frac12(\Omega_1+\Omega_2)
-
\frac{\sqrt3}{12}[\Omega_1,\Omega_2],
\]
with Gauss--Legendre nodes. For any traceless $2\times2$ matrix $X$,
\[
\exp X
=
\cosh\sigma\,I+\frac{\sinh\sigma}{\sigma}X,
\qquad
\sigma=\sqrt{\frac12\tr(X^2)}.
\]
After each step, one may renormalize to maintain $\det S=1$.

\item \textbf{Adaptive step control.}
An embedded local error estimator can be used to adapt the step size. If $\epsilon$ denotes the normalized local error, a standard update is
\[
h_{\mathrm{new}}
=
\min\{2,\max\{1/2,0.9\,\epsilon^{-1/4}\}\}\,h.
\]

\item \textbf{Iwasawa or polar decomposition.}
At each step, compute a polar or Iwasawa-type factorization of $S$ to obtain the unitary factor $\Phi$. The immersion and adjusted normal Gauss map are then recovered from
\[
f=SS^*,
\qquad
\mathcal G=-\Phi\sigma_3\Phi^*.
\]

\item \textbf{Flatness residuals.}
For numerical data, one should monitor the residual
\[
d\Omega+\Omega\wedge\Omega
\]
as a consistency check. This is essential, since the rank-one condition alone does not guarantee flatness.

\item \textbf{Monodromy.}
On non-simply connected domains, compute the holonomy matrices around generators of $\pi_1(M)$. The descent condition requires these holonomies to be unitary, or at least unitarizable by an admissible gauge.

\end{enumerate}

\section{Applications and further directions}
\label{sec:applications}

Several directions arise naturally from this framework.

\begin{enumerate}[label=\textbf{(\arabic*)}, leftmargin=3em]

\item \textbf{Willmore viewpoint.}
As $H\to0$, the target metric for the adjusted normal Gauss map approaches the Kobayashi metric. This connects the present construction to the Willmore surface viewpoint and to harmonic map methods.

\item \textbf{Flatness-constrained moduli.}
The examples above show that rank-one data alone are not sufficient. The flatness condition imposes additional differential constraints on the scalar factor and the null direction defining $\eta$. Understanding the moduli of admissible flat rank-one data remains an interesting problem.

\item \textbf{Global closing problems.}
On non-simply connected domains, one must solve a monodromy unitarization problem. This is the natural analog of period closing in classical Weierstrass representations.

\item \textbf{Real forms and related geometries.}
The balancing principle is closely related to other CMC surface theories, including the cases $H>1$ in hyperbolic space, CMC surfaces in $S^3$, and timelike CMC surfaces in Lorentzian space forms.

\item \textbf{Numerical continuation.}
The explicit flatness equations and the Magnus integration scheme make it possible to explore families of local solutions numerically, while monitoring both determinant preservation and flatness residuals.

\end{enumerate}

\section{Concluding remarks}

We presented a Weierstrass--Kenmotsu type representation for conformal CMC immersions with $0\le H<1$ in $\HH$. The construction uses rank-one $(1,0)$ data together with a constant spectral parameter $\lambda$, with mean curvature encoded by
\[
H=\frac{1-|\lambda|^2}{1+|\lambda|^2}\cdot
\]
The essential integrability requirement is the flatness of the associated connection
\[
\Omega=\eta-\lambda\eta^*.
\]
This condition is the geometric content of the Gauss--Codazzi equations in the representation.

The Iwasawa splitting clarifies the role of the adjusted normal Gauss map and connects the formulation to the Aiyama--Akutagawa representation. Local model examples show how the flatness condition leads to explicit differential constraints on the rank-one data. Global questions, including monodromy and completeness, remain naturally tied to the same flatness and unitarization conditions.

\bigskip

\noindent\textbf{Acknowledgements.}
The authors thank Professor Kazuo Akutagawa for helpful comments concerning terminology and historical attribution related to the adjusted normal Gauss map. They also thank Professors Josef Dorfmeister, Franz Pedit, Alexander Bobenko, Wayne Rossman, and Junichi Inoguchi for valuable discussions over many years.

\appendix

\section{Balancing derivation and phase bookkeeping}
\label{app:balancing}

We record the derivation leading to the balanced modulus relation. Starting from the Lax pair in \eqref{eq:AB}, apply the transformations
\[
(1\pm H)\mapsto s^{\pm1}(1\pm H),
\qquad
Q\mapsto\theta^{-2}Q.
\]
Conjugation by
\[
g(\theta)=i
\begin{pmatrix}
0&\theta^{1/2}\\
\theta^{-1/2}&0
\end{pmatrix}
\]
moves the phase into the off-diagonal entries. The additional diagonal conjugation
\[
R_{\pi/4}
=
\begin{pmatrix}
e^{-i\pi/4}&0\\
0&e^{i\pi/4}
\end{pmatrix}
\]
introduces the factors needed for comparison with the unitary frame. The resulting off-diagonal magnitudes are
\[
s^{\pm1}\frac{e^u}{2}(1\pm H).
\]
Equating these with
\[
\frac{e^u}{2}\sqrt{1-H^2}
\]
gives
\[
s=\sqrt{\frac{1-H}{1+H}},
\qquad
H=\frac{1-s^2}{1+s^2}\cdot
\]

\section{Symbolic exponentials}
\label{app:exp}

For a traceless $2\times2$ matrix $X$, one has
\[
X^2=\frac12\tr(X^2)\,I.
\]
Thus, setting
\[
\sigma=\sqrt{\frac12\tr(X^2)},
\]
we obtain
\[
\exp X
=
\cosh\sigma\,I+\frac{\sinh\sigma}{\sigma}X.
\]
In particular, for
\[
X=\alpha E_{21}+\beta E_{12},
\]
one has
\[
X^2=(\alpha\beta)I,
\qquad
\sigma=\sqrt{\alpha\beta}.
\]

\section{Jacobi operator in Aiyama--Akutagawa variables}
\label{app:AAjacobi}

We express the Jacobi operator \eqref{eq:Jacobi} in terms of the Aiyama--Akutagawa variables. Recall that
\[
\omega=
\frac{-2\,\overline{\nu}_z}
{\sqrt{1-H^2}(1-|\nu|^4)}\,dz,
\qquad
\alpha=
\begin{pmatrix}
-\nu&\nu^2\\
-1&\nu
\end{pmatrix}\omega.
\]
The induced metric has the form
\[
ds^2=(1+|\nu|^2)^2\,\omega\,\bar\omega.
\]
Writing
\[
ds^2=e^{2u}|dz|^2,
\]
we obtain
\[
e^{2u}=(1+|\nu|^2)^2|\omega|^2.
\]

The Hopf differential can be read from the $(2,1)$-entry of the $(1,0)$ part of the connection. In the present normalization this gives
\[
Q=C_H e^u\omega,
\]
where
\[
C_H=2(1+H)-\sqrt{1-H^2}.
\]
Consequently,
\[
|Q|^2e^{-4u}
=
|C_H|^2e^{-2u}|\omega|^2
=
\frac{|C_H|^2}{(1+|\nu|^2)^2}.
\]
Substituting into \eqref{eq:Jacobi} yields
\[
\mathcal J\varphi
=
-\Delta\varphi
-
\left(
2H^2-2
-\frac12\frac{|C_H|^2}{(1+|\nu|^2)^2}
\right)\varphi.
\]

\begin{remark}
The coefficient $C_H$ depends on the chosen balanced normalization. Other consistent normalizations lead to equivalent expressions after rescaling the corresponding Weierstrass data.
\end{remark}


\bigskip

\noindent
\textbf{Magdalena Toda}\\
Department of Mathematics and Statistics, Texas Tech University,\\
Lubbock, TX 79409, USA\\
\textit{Email:} \texttt{magda.toda@ttu.edu}

\bigskip

\noindent
\textbf{Erhan Güler}\\
Department of Mathematics and Statistics, Texas Tech University,\\
Lubbock, TX 79409, USA\\
\textit{Email:} \texttt{eguler@ttu.edu}

\bigskip

\noindent
\textbf{Madusha Dilhani Atampalage}\\
Department of Computer Systems Engineering, University of Kelaniya,\\
Kelaniya 11600, Sri Lanka\\
\textit{Email:} \texttt{madusha.chandrasena@gmail.com}

\end{document}